\documentclass{amsart}

\usepackage{url}

\newtheorem{theorem}{Theorem}[section]
\newtheorem{proposition}[theorem]{Proposition}

\newtheorem{fact}[theorem]{Fact}

\theoremstyle{definition}

\theoremstyle{remark}

\newtheorem{question}[theorem]{Question}

\numberwithin{equation}{section}


\def\Ind{\setbox0=\hbox{$x$}\kern\wd0\hbox to 0pt{\hss$\mid$\hss} \lower.9\ht0\hbox to 0pt{\hss$\smile$\hss}\kern\wd0} 

\def\Notind{\setbox0=\hbox{$x$}\kern\wd0\hbox to 0pt{\mathchardef \nn=12854\hss$\nn$\kern1.4\wd0\hss}\hbox to 0pt{\hss$\mid$\hss}\lower.9\ht0 \hbox to 0pt{\hss$\smile$\hss}\kern\wd0}

\def \d {\delta}

\def \D {\Delta}

\def \DCF {\operatorname{DCF}}

\title[Algebro-geometric axioms for $\DCF_{0,m}$]{Algebro-geometric axioms for $\DCF_{0,m}$}
\author{Omar Le\'on S\'anchez}
\address{Omar Le\'on S\'anchez\\
University of Manchester\\
School of Mathematics\\
Oxford Road \\
Manchester, M13 9PL.}
\email{omar.sanchez@manchester.ac.uk}
\date{\today}
\subjclass[2010]{03C65, 12H05.}
\keywords{differential fields, existentially closed models, geometric axioms}

\date{\today}

\begin{document}

\begin{abstract}
We give an algebro-geometric first-order axiomatization of $\DCF_{0,m}$, the theory of differentially closed fields of characteristic zero with $m$ commuting derivations, in the spirit of the classical geometric axioms of $\DCF_0$ [Pierce and Pillay, ``A Note on the Axioms for Differentially Closed Fields of Characteristic Zero'', {\em J. Algebra}, 204, 1998].
\end{abstract}

\maketitle

\section{A brief history}\label{history}

An elegant first-order axiomatization of $\DCF_0$ (the theory of ordinary differentially closed fields of characteristic zero) was given by Blum in \cite{Bl}. Her axioms express the following: an ordinary differential field $(K,\d)$ of characteristic zero is differentially closed if and only if
\begin{enumerate}
\item [(\#)] For every pair $f$ and $g\neq 0$ of differential polynomials in one variable over $K$ such that $\operatorname{ord}(f)>\operatorname{ord}(g)$, there is $a\in K$ with $f(a)=0$ and $g(a)\neq 0$.
\end{enumerate}

In \cite{PP}, Pierce and Pillay gave an algebro-geometric axiomatization of $\DCF_0$ in terms of prolongations of affine algebraic varieties. Recall that, given an algebraically closed differential field $(K,\d)$ and a Zariski-constructible set $X$ of $K^n$, the (first) prolongation of $X$, $\tau_\d X\subseteq K^{2n}$, is the Zariski-constructible set given by the conditions
$$x\in X \quad \text{ and }\quad  \sum_{i=1}^n\frac{\partial f_j}{\partial x_i}(x)\cdot y_i +f_j^\d(x)=0\quad \text{ for } j=1,\dots,s$$
where $f_1,\dots,f_s$ are generators of the ideal of polynomials over $K$ vanishing at $X$, and each $f_j^\d$ is obtained by applying $\d$ to the coefficients of $f_j$. Note that $(a,\d a)\in \tau_\d X$ for all $a\in X$. Also, for a given $n$, we let $\pi:K^{2n}\to K^n$ be the projection onto the first $n$ coordinates. The Pierce-Pillay axioms can be stated as follows: the differential field $(K,\d)$ is differentially closed if and only if $K$ is algebraically closed and
\begin{enumerate}
\item [($\dagger$)] for every irreducible Zariski-closed set $W$ of $K^{2n}$ such that $W\subseteq \tau_\d(\pi(W))$, there is $a\in K^n$ with $(a,\d a)\in W$.
\end{enumerate}
We note that this formulation is slightly different from what appears in \cite{PP}. We will give a proof of this axiomatization via some basic results of the theory of differential kernels for ordinary differential fields in Section \ref{ordinary}.

For differentially closed fields of characteristic zero with $m$ commuting derivations, a first-order axiomatization generalizing Blum's was given by McGrail in \cite{Mc}. Another axiomatization can be found in the work of Tressl \cite{Tr}. Both of these axiomatizations of $\DCF_{0,m}$ are in terms of coherent autoreduced sets of differential polynomials, and rely heavily on deep differential-algebraic results such as Rosenfeld's lemma \cite{R} (which ultimately gives an ``algebraic'' characterization of the characteristic sets of prime differential ideals). The main motivation of this note is to give \emph{once and for all} a clear purely algebro-geometric axiomatization of $\DCF_{0,m}$. We do this in Theoreom~\ref{main} below.

Due to the commutativity of the derivations, a naive generalization of ($\dagger$) does not hold in the partial setting. For instance, consider the case of an algebraically closed field with two commuting derivations $(K,\D=\{\delta_1,\delta_2\})$. In this case, given a Zariski-constructible $X\subseteq K^n$, the (first) prolongation of $X$, $\tau_\D X\subseteq K^{3n}$, is given by the fibre-product  $\displaystyle \tau_{\d_1}X\times_X\tau_{\d_2}X$; and, given $n$, $\pi:K^{3n}\to K^n$ denotes the projection onto the first $n$ coordinates. If $W$ is the irreducible Zariski-closed set of $K^3$ given by $y=1$ and $z=x$, where $(x,y,z)$ and coordinates of $K^3$, then $\pi(W)=K$ and so $W\subseteq \tau_{\D}(\pi(W))=K^{3n}$. However, there is no $a\in K$ such that $(a,\d_1 a,\d_2 a)\in W$. Indeed, if there were such an $a$ we would get 
$$1=\d_1 \d_2 a=\d_2\d_1 a=0. $$

In a series of papers \cite{P1, P2} Pierce attempted to use differential forms to find an algebro-geometric axiomatization for $\DCF_{0,m}$. However, as he points out in \cite[\S3]{Pie}, ``differential forms are apparently not useful for this after all''. Nonetheless, in \cite{Pie}, he does manage to give an axiomatization (in arbitrary characteristic) that has a geometric flavour, though not exactly in the Pierce-Pillay spirit (it is given in terms of field extensions satisfying the \emph{differential condition} and \emph{minimal separable leaders}). On the other hand, in \cite{LS,LS2}, I established an axiomatization of $\DCF_{0,m}$ which is geometric relative to the theory $DCF_{0,m-1}$; however, ultimately these axioms are given in terms of characteristic sets of prime differential ideals, i.e., again using Rosenfeld's lemma. Personally, I find all the current axiomatizations of $\DCF_{0,m}$ unaccessible to readers with little differential-algebraic background (that is, with no knowledge of what a coherent autoreduced set is). So, I decided to write this short note to have an axiomatization of $\DCF_{0,m}$ expressed only of terms of classical algebro-geometric objects and which I hope is accessible to all readers with basic knowledge of model theory and differential fields. 

Let me finish this brief history by reminding the reader of some of the different contexts in which one can prove the existence of a model companion via a ``geometric axiomatization'':
\begin{enumerate}
\item difference fields: ACFA \cite{CH}
\item differential-difference fields: DCF$_0$A \cite{Bu} or, more generally, DCF$_{0,m}$A \cite{LS2}
\item fields in characteristic $p>0$ equipped with a derivation of the $n$-th power of the Frobenius~\cite{Kow}
\item fields with commuting Hasse-Schmidt derivations in positive characteristic~\cite{Kow2}
\item fields with free operators \cite{MS}
\item theories having a ``geometric notion of genericity'' \cite{Hi}
\end{enumerate}

{\bf Acknowledgements.} I would like to thank the anonymous referee for his/her helpful comments and suggestions on a previous version of this note.
 
\section{A quick proof in the ordinary case}\label{ordinary}

In this section we use the theory of kernels for ordinary differential fields \cite{Lando} to prove the algebro-geometric axiomatization for $\DCF_0$ of Pierce-Pillay as formulated in Section \ref{history}. 

Given an ordinary differential field $(K,\d)$ of characteristic zero, a \emph{(differential) kernel} of length $r$ over $K$ is a field extension of the form
$$L=K(a_i^j:1\leq i\leq n,0\leq j\leq r)$$
for some positive integer $n$, such that there exists a derivation 
$$D:K(a_i^j:1\leq i\leq n,0\leq j\leq r-1)\to L$$
extending $\d$ and $\d a_i^j=a_i^{j+1}$ for $j=0,\dots, r-1$. Given a kernel $(L,D)$ of length $r$, we say that $(L,D)$ has a \emph{prolongation} of length $s\geq r$ if there is a kernel $(L',D')$ of length $s$ over $K$ such that $L'$ is a field extension of $L$ and $D'$ extends $D$. Finally, we say that $(L,D)$ has a \emph{regular realization} if there is a differential field extension $(M,\d')$ of $(K,\d)$ such that $M$ is a field extension of $L$ and $\d'a_i^j=a_i^{j+1}$ for all $j=0,\dots,r-1$. 

In \cite[Proposition 3]{Lando}, Lando showed that 

\begin{fact}\label{land}
In the ordinary case, every differential kernel has a regular realization. 
\end{fact}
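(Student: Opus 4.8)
The plan is to build the regular realization as an increasing union of prolongations of the given kernel, so that the whole argument reduces to a single prolongation step that is then iterated. First I would establish a \emph{prolongation lemma}: every kernel $(L,D)$ of length $r$ prolongs to a kernel of length $r+1$. Writing $F=K(a_i^j:1\le i\le n,\,0\le j\le r-1)$, the given derivation is a map $D\colon F\to L$ with $L=F(a_i^r:1\le i\le n)$, and the task is to extend $D$ to a derivation $D'$ on all of $L$, landing in some field extension $L'$ of $L$, whose values on the top row I am free to record as fresh variables $a_i^{r+1}:=D'a_i^r$. Then $L'=K(a_i^j:0\le j\le r+1)$ together with $D'$ is a kernel of length $r+1$ extending $(L,D)$.

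The crux is that such an extension of the derivation always exists in characteristic zero. Since $L/F$ is finitely generated and characteristic zero extensions are separably generated, I can extract from the top-row generators a separating transcendence basis $\{a_i^r:i\in T\}$, with the remaining $a_i^r$ separably algebraic over $F(a_i^r:i\in T)$. On the basis I assign $a_i^{r+1}$ ($i\in T$) to be brand-new transcendental elements, adjoined to $L$ to form $L'$; on the algebraic generators the value $D'a_i^r$ is then forced, and it lies in $L'$ because differentiating the (separable) minimal polynomial produces a nonzero derivative by which one solves for $D'a_i^r$. This is exactly the classical derivation-extension criterion, and it is the only nontrivial ingredient. I would stress that this is precisely where the \emph{ordinary} hypothesis enters: with a single derivation there is no commutativity/integrability constraint to respect, so the extension is unobstructed, whereas in the partial case one must additionally arrange that the prescribed top-row values be compatible with $\delta_k\delta_\ell=\delta_\ell\delta_k$, which is why the naive analogue fails there.

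With the prolongation lemma in hand, I would iterate it: starting from $(L,D)=(L_r,D_r)$, produce for every $s\ge r$ a kernel $(L_s,D_s)$ of length $s$ with $L_r\subseteq L_{r+1}\subseteq\cdots$ and with each $D_{s+1}$ restricting to $D_s$ on $K(a_i^j:0\le j\le s-1)$. Finally I would pass to the union $M=\bigcup_{s\ge r}L_s$ and define $\delta'\colon M\to M$ by $\delta'(x)=D_{s+1}(x)$ whenever $x\in K(a_i^j:0\le j\le s)$; compatibility of the $D_s$ makes this well defined, it is a derivation extending $\delta$, and by construction $\delta'a_i^j=a_i^{j+1}$ for all $j\ge 0$. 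Hence $(M,\delta')$ is a differential field extension of $(K,\delta)$ containing $L$ with $\delta'a_i^j=a_i^{j+1}$ for $0\le j\le r-1$, i.e.\ a regular realization. The entire difficulty is concentrated in the prolongation lemma; once the derivation is known to extend, the iteration and direct-limit steps are routine bookkeeping.
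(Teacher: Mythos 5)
Your argument is correct, but note that the paper does not prove this statement at all: it is imported as a black box from Lando's paper (his Proposition 3), and the whole point of quoting it as a ``Fact'' is to avoid reproving it. What you have written is essentially the classical kernel-prolongation argument that underlies Lando's result: a single prolongation step via the characteristic-zero derivation-extension theorem (choose a transcendence basis among the top-row generators, assign free values there, and let the values on the remaining, separably algebraic, generators be forced --- consistency being guaranteed by the uniqueness of the extension of a derivation to a separable algebraic extension), followed by iteration and a direct limit. Your diagnosis of where the ordinary hypothesis is used is also the right one, and it matches the paper's own discussion of why the naive analogue of this fact fails for $m\geq 2$ commuting derivations: with one derivation there is no integrability constraint $\d_k\d_\ell=\d_\ell\d_k$ to satisfy, so the top-row values can be prescribed freely. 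In short, the proof is complete relative to the paper's definitions of kernel, prolongation, and regular realization; it simply supplies a proof where the paper chose to cite one.
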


This yields the desired algebro-geometric axiomatization of $\DCF_{0}$. Recall that $\tau_\d$ denotes the (first) prolongation functor and, given $n$, $\pi:K^{2n}\to K^n$ is the projection onto the first $n$ coordinates.

\begin{proposition}
An ordinary differential of characteristic zero $(K,\d)$ is a model of $\DCF_{0}$ if and only if $K$ is algebraically closed and 
\begin{enumerate}
\item [($\dagger$)] for every irreducible Zariski-closed set $W$ of $K^{2n}$ such that $W\subseteq \tau_\d(\pi(W))$, there is $a\in K^n$ with $(a,\d a)\in W$.
\end{enumerate}

\end{proposition}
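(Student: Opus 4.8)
The plan is to route both directions through the characterization of models of $\DCF_0$ as the existentially closed differential fields of characteristic zero (the models of the model companion). Since differentially closed fields are algebraically closed, the $\mathrm{ACF}$ clause is immediate in the forward direction and assumed in the converse, so the real content is the equivalence, over an algebraically closed $(K,\d)$, between existential closedness and ($\dagger$).

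For the forward direction, assume $(K,\d)\models\DCF_0$ and let $W\subseteq K^{2n}$ be irreducible Zariski-closed with $W\subseteq\tau_\d(\pi(W))$. First I would pass to a generic point $(\a,\b)$ of $W$ over $K$ in some field extension and set $V=\overline{\pi(W)}$, so that $\a$ is generic in $V$ and, since $\tau_\d(\pi(W))=\tau_\d V$, the hypothesis gives $(\a,\b)\in\tau_\d V$. The key algebraic point is that the defining equations of $\tau_\d V$ are precisely the compatibility conditions for the assignment $x_i\mapsto\b_i$ to extend $\d$ to a derivation on $K[\a]=K[x]/I(V)$; hence $(\a,\b)\in\tau_\d V$ yields a derivation $D\colon K(\a)\to K(\a,\b)$ extending $\d$ with $D\a_i=\b_i$. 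Setting $a_i^0=\a_i$ and $a_i^1=\b_i$, the pair $(K(\a,\b),D)$ is exactly a differential kernel of length $1$ over $K$. By Fact~\ref{land} it has a regular realization $(M,\d')\supseteq(K,\d)$, in which $(\a,\d'\a)=(\a,\b)\in W$. Thus the existential condition $\exists x\,(x,\d x)\in W$ holds in $M$, and since $(K,\d)$ is existentially closed it already holds in $K$, producing the required $a$.

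For the converse, assume $K$ is algebraically closed and ($\dagger$) holds; I would show $(K,\d)$ is existentially closed. It suffices to take a differential field extension $(F,\d)\supseteq(K,\d)$ and a point $\c\in F^n$ satisfying finitely many differential equations $f_i=0$ and an inequation $g\neq 0$ over $K$, and to find such a point in $K$. Choosing $r$ larger than the orders of all the $f_i$ and $g$, I would form the jet $\c,\d\c,\dots,\d^r\c$ and let $W\subseteq K^{2N}$, with $N=n(r+1)$, be the Zariski locus over $K$ of $(\c,\dots,\d^r\c,\,\d\c,\dots,\d^{r+1}\c)$, with $V=\overline{\pi(W)}$ the locus of $(\c,\dots,\d^r\c)$. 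Both are irreducible because $K$ is algebraically closed, and the same ``differentiate a vanishing relation'' computation as above shows the generic point of $W$ lies in $\tau_\d V=\tau_\d(\pi(W))$, so $W\subseteq\tau_\d(\pi(W))$. Applying ($\dagger$) gives $b\in K^N$ with $(b,\d b)\in W$; the relations built into $W$ expressing $\d(\d^j\c)=\d^{j+1}\c$ force the successive blocks of $b$ to satisfy $b_{j+1}=\d b_j$, so $b=(a,\d a,\dots,\d^r a)$ for $a:=b_0\in K^n$ and $(b,\d b)$ is the order-$(r+1)$ jet of $a$. Consequently every differential equation of order $\le r+1$ satisfied by $\c$ transfers to $a$, giving $f_i(a)=0$.

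The main obstacle is the inequation: ($\dagger$) only produces a point of the closed set $W$, whereas preserving $g\neq 0$ is an open condition that a non-generic point of $W$ may violate. I would resolve this by a Rabinowitsch-style localization: replace $\c$ by the enlarged tuple $(\c,e)$ with $e=g(\c)^{-1}\in F$, and run the construction on it. The relation $g\cdot e=1$ has order $\operatorname{ord}(g)\le r$, so it lies in the defining ideal of the enlarged $W$ and is inherited by the point produced by ($\dagger$); this forces $g(a)\neq 0$ automatically, turning the inequation into a consequence of an equation. Finally, since a conjunction of inequations reduces to a single one in a field and every existential formula reduces to a disjunction of such systems, this establishes existential closedness and hence that $(K,\d)\models\DCF_0$.
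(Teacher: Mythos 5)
Your proof is correct and follows essentially the same route as the paper: the forward direction is verbatim the paper's argument (generic point of $W$, extending $\d$ to a derivation sending $\a$ to $\b$, viewing $K(\a,\b)$ as a length-one differential kernel, invoking Fact~\ref{land} for a regular realization, then using existential closedness of $K$), and your converse is exactly the jet/Zariski-locus construction that the paper carries out for the partial case in the proof of Theorem~\ref{main}. The one place you go beyond the paper is the explicit Rabinowitsch localization to preserve the inequation $g\neq 0$; this is a genuine detail, since membership in the Zariski locus of the jet of $c$ only transfers equations, and the paper's assertion that the witness ``also realizes $\gamma$'' for an arbitrary quantifier-free $\gamma$ silently relies on such a reduction.
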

\begin{proof}
Assume $(K,\d)$ is differentially closed. Let $W$ be as in condition ($\dagger$), we must show that there is $a\in K^n$ such that $(a,\d a)\in W$. Let $F$ be an algebraically closed field extension of $K$ containing a generic point $(b,c)\in F^{2n}$ of $W$ over $K$.  Then, $b$ is a generic point of $\pi(W)$ over $K$. Since $(b,c)\in W\subseteq \tau_\d(\pi(W))$, by the standard argument for extending a single derivation (see \cite[Chapter~7, Theorem~5.1]{La}, for instance), we have that there is a derivation $D:K(b)\to K(b,c)$ extending $\d$ such that $\d b=c$. Thus, $L=K(b,c)$ is a differential kernel (of length 1) over $K$. By Fact \ref{land}, $L$ has a regular realization; i.e., there is a differential field extension $(M,\d')$ of $(K,\d)$ such that $\d' b=c$. Note that then $(b,\d'b)\in W$. Since $(K,\d)$ is differentially closed, we can now find the desired point $a$ in $K^n$.

The other direction is a standard argument. However, we do give the details for the partial case in Theorem \ref{main} (from which, of course, this case can be deduced).
\end{proof}

\section{Algebro-geometric axioms for $\DCF_{0,m}$}

In this section we prove our algebro-geometric axiomatization of $\DCF_{0,m}$ using the recently developed theory of differential kernels for fields with several commuting derivations \cite{GO} (which generalizes the results from the ordinary case). 

Recall that $m$ is fixed (the number of derivations). For each $\xi=(\xi_1,\dots,\xi_m)\in\mathbb N^m$, we let $\deg\xi=\xi_1+\cdots+\xi_m$. For each nonnegative integer $r$, we let 
$$\Gamma(r)=\{\xi\in \mathbb N^m: \deg \xi\leq r\}.$$
Given a differential field $(K,\D=\{\d_1,\dots,\d_m\})$ of characteristic zero, a (differential) kernel of length $r$ over $K$ is a field extension of the form
$$L=K(a_i^{\xi}:1\leq i\leq n,\, \xi\in\Gamma(r))$$
for some positive integer $n$, such that there exist derivations 
$$D_k:K(a_i^\xi:1\leq i\leq n, \,\xi\in \Gamma(r-1))\to L$$
for $k=1,\dots,m$ extending $\d_k$ and $D_ka_i^\xi=a_i^{\xi+{\bf k}}$ for all $\xi\in \Gamma(r-1)$, where ${\bf k}$ denotes the $m$-tuple whose $k$-th entry is one and zeroes elsewhere. 

Given a kernel $(L,D_1,\dots,D_k)$ of length $r$, we say it has a prolongation of length $s\geq r$ if there is a kernel $(L',D_1',\dots,D_k')$ of length $s$ over $K$ such that $L'$ is a field extension of $L$ and each $D_k'$ extends $D_k$. Finally, we say that $(L,D_1,\dots,D_k)$ has a regular realization if there is a differential field extension $(M,\D'=\{\d_1',\dots,\d_m'\})$ of $(K,\D)$ such that $M$ is a field extension of $L$ and $\d_k'a_i^\xi=a_i^{\xi+{\bf k}}$ for all $\xi\in \Gamma(r-1)$ and $k=1,\dots ,m$. 

In contrast with the ordinary case, it is not the case that every kernel has a regular realization (this is witnessed, for instance, by the example we provided in the introduction). In \cite{GO}, an upper bound $C_{r,m}^n$ was obtained for the length of a prolongation of a kernel that guarantees the existence of a regular realization. This bound depends only on the data $(r,m,n)$ and is constructed recursively. Recall that the Ackermann function $A:\mathbb N\times\mathbb N\to \mathbb N$ is a recursively defined function given as follows:
$$
A(x,y) = \begin{cases} y + 1 & \text{ if } x = 0 \\ A(x-1,1) & \text{ if } x > 0 \text{ and } y = 0 \\ A(x-1,A(x,y-1)) & \text{ if } x,y > 0. \end{cases}
$$
The natural number $C_{r,m}^n$ is recursively defined as follows: 
$$C_{0,m}^1=0, \quad\; C_{r,m}^1=A(m-1,C_{r-1,m}^1), \quad \text{ and } \quad C_{r,m}^n=C_{C_{r,m}^{n-1},m}^1.$$
For example,
$$C_{r,1}^n=r, \quad\; C_{r,2}^n=2^n r \quad \text{ and }\quad C_{r,3}^1=3(2^r-1).$$
In \cite[Theorem 3.4]{GO}, it is proved that

\begin{fact}\label{useful}
If a differential kernel $L$ of length $r$ has a prolongation of length $C_{r,m}^n$, then $L$ has a regular realization.
\end{fact}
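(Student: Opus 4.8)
The plan is to reduce the existence of a regular realization to a purely combinatorial question about prolongability, and then to extract the bound $C_{r,m}^n$ from a stabilization analysis of the kernel's leaders. The whole argument is the content of \cite[Theorem 3.4]{GO}, so I describe how I would organize it rather than grind through the estimates.

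First I would isolate the standard principle that a differential kernel $(L,D_1,\dots,D_m)$ of length $r$ has a regular realization if and only if it admits prolongations of every length $s\geq r$. The forward implication is trivial, since a realization restricts to prolongations of all lengths. For the converse, given a compatible tower of prolongations $(L_s,D_1^{(s)},\dots,D_m^{(s)})$ of increasing length, I would pass to the direct limit $L_\infty=\bigcup_s L_s$. Every element of $L_\infty$ lies in some $L_s$ and hence in the domain of $D_k^{(s')}$ for all large $s'$, so the $D_k^{(s)}$ glue to genuine derivations $\d_k'$ on $L_\infty$ extending $\d_k$ with $\d_k'a_i^\xi=a_i^{\xi+{\bf k}}$. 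Commutativity $\d_k'\d_l'=\d_l'\d_k'$ holds on the generators because $a_i^{\xi+{\bf k}+{\bf l}}=a_i^{\xi+{\bf l}+{\bf k}}$ and on $K$ because the $\d_k$ already commute; since the commutator of two derivations is a derivation vanishing on a generating set, it vanishes identically, so $(L_\infty,\{\d_k'\})$ is a differential field realizing the kernel. This reduces the Fact to showing that a single prolongation of length $C_{r,m}^n$ already forces prolongability to all lengths.

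Next I would analyze the one-step prolongation problem, i.e.\ when a kernel of length $s$ extends to one of length $s+1$. The natural bookkeeping is by leaders: among the prospective coordinates $a_i^\xi$ with $\deg\xi=s$, one records those that are algebraic over the field generated by the lower levels (forced) and those that remain transcendental (free). The obstruction to taking one more step is a coherence condition: the assignment $D_k a_i^\xi=a_i^{\xi+{\bf k}}$ must be consistent with formally differentiating the algebraic relations already present, and the prospective derivations must commute, so that each multi-index is reached compatibly regardless of the order in which the $D_k$ are applied. For each coordinate $i$ the set of leader-indices is a subset of $\mathbb N^m$, and it is the growth and eventual stabilization of these sets, controlled via Dickson's lemma, that governs prolongability.

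The crux, and the step I expect to be the main obstacle, is the quantitative stabilization showing that once the kernel has been prolonged through $C_{r,m}^n$ levels the leader configuration can no longer obstruct a further step, so prolongability propagates to all lengths. I would first treat the single-variable case $n=1$, where the length needed for the relevant subsets of $\mathbb N^m$ to stabilize is governed by an Ackermann-type iterate, matching the recursion $C_{r,m}^1=A(m-1,C_{r-1,m}^1)$; the dependence on $m$ enters precisely because the well-quasi-order growth estimates for such subsets of $\mathbb N^m$ are controlled by $A(m-1,\cdot)$. I would then reduce the $n$-tuple case to the single-variable one by handling the coordinates successively, which inflates the relevant length parameter and yields exactly the nesting $C_{r,m}^n=C_{C_{r,m}^{n-1},m}^1$. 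The delicate point throughout is verifying that these explicit bounds genuinely suffice for stabilization, and making the informal ``leader'' bookkeeping above into a rigorous invariant whose behaviour is tracked by the Ackermann recursion.
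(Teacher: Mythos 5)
The paper does not actually prove this statement: it is imported verbatim as Theorem~3.4 of Gustavson--Le\'on S\'anchez \cite{GO} and used as a black box, so there is no internal proof to compare your attempt against. Judged on its own terms, your outline does follow the architecture of the proof in \cite{GO} --- reduction to indefinite prolongability, bookkeeping by leaders, stabilization of subsets of $\mathbb N^m$ with Ackermannian control coming from the theory of well-quasi-orders --- and the direct-limit argument in your first step (including the observation that the commutator of the glued derivations vanishes on a generating set) is correct as far as it goes.

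There is nevertheless a genuine gap, and it sits exactly where the entire content of the Fact lives. First, your reduction is to ``prolongations of every length,'' but your direct-limit construction requires a \emph{compatible tower} of prolongations; the mere existence of some prolongation of each length $s$ does not by itself yield such a tower, and bridging this (via generic or principal prolongations, which dominate all others and can be iterated once the leader sets stabilize) is a nontrivial piece of the argument that your sketch does not supply. Second, and more seriously, the assertion that a single prolongation of length $C_{r,m}^n$ forces the leader configuration to stabilize --- that is, that these particular Ackermannian iterates suffice --- is precisely the theorem to be proved, and your proposal explicitly defers it (``the delicate point throughout is verifying that these explicit bounds genuinely suffice''). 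Dickson's lemma only gives stabilization \emph{eventually}, with no effective bound; extracting the bound requires the machinery of controlled bad or antichain sequences in $\mathbb N^m$ and an induction calibrated to the recursion defining $C_{r,m}^n$, none of which is carried out. As written, the proposal is an accurate table of contents for the proof rather than a proof.
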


This will yield the algebro-geometric axiomatization of $\DCF_{0,m}$. But first we need some additional notation. For a given positive integer $n$, we let 
$$\alpha(n,m)=n\cdot\binom{C_{1,m}^n+m}{m},$$
and
$$\beta(n,m)=n\cdot\binom{C_{1,m}^n-1+m}{m}.$$
We also let $\pi:K^{\alpha(n,m)}\to K^{\beta(n,m)}$ be the projection onto the first $\beta(n,m)$ coordinates; i.e., setting $(x^\xi)_{\xi\in\Gamma(C_{1,m}^n)}:=(x_i^{\xi}:1\leq i\leq n, \xi\in\Gamma(C_{1,m}^n))$ to be coordinates for $K^{\alpha(n,m)}$ then $\pi$ is the map 
$$(x^{\xi})_{\xi\in\Gamma(C_{1,m}^n)}\mapsto (x^{\xi})_{\xi\in\Gamma(C_{1,m}^n-1)}.$$
It is worth noting here that $\alpha(n,m)=n\,|\Gamma(C_{1,m}^n)|$ and $\beta(n,m)=n\,|\Gamma(C_{1,m}^n-1)|$. We will also use the projection $\psi:K^{\alpha(n,m)}\to K^{n(m+1)}$ onto the first $n(m+1)$ coordinates, that is, 
$$(x^{\xi})_{\xi\in\Gamma(C_{1,m}^n)}\mapsto (x^{\xi})_{\xi\in\Gamma(1)}.$$
Finally, we will use the embedding $\phi:K^{\alpha(n,m)}\to K^{\beta(n,m)\cdot(m+1)}$ given by 
$$(x^{\xi})_{\xi\in\Gamma(C_{1,m}^n)}\mapsto \left((x^{\xi})_{\xi\in\Gamma(C_{1,m}^n-1)},(x^{\xi+{\bf 1}})_{\xi\in\Gamma(C_{1,m}^n-1)},\dots,(x^{\xi+{\bf m}})_{\xi\in\Gamma(C_{1,m}^n-1)}\right).$$
 
Given an algebraically closed differential field $(K,\D)$, the (first) prolongation of a Zariski-constructible set $X$ of $ K^n$ is defined as the fibre-product 
$$\tau_\D X:=\tau_{\d_1}X\times_X\cdots\times_X\tau_{\d_m}X\subseteq K^{n(m+1)}.$$
In other words, $\tau_\D X$ is the Zariski-constructible set defined by the conditions
$$x\in X, \quad \text{ and }\quad  \sum_{i=1}^n\frac{\partial f_j}{\partial x_i}(x)\cdot y_{i,k} +f_j^{\d_k}(x)=0\; \text{ for } 1\leq j\leq s, \; 1\leq k\leq m$$ 
where $f_1,\dots,f_s$ are generators of the ideal of polynomials over $K$ vanishing at $X$. Note that $(a,\d_1 a,\dots,\d_m a)\in \tau_\D X$ for all $a\in X$.

\smallskip

We can now prove the desired axiomatization.

\begin{theorem}\label{main}
A differential field $(K,\D=\{\d_1,\dots,\d_m\})$ of characteristic zero is a model of $\DCF_{0,m}$ if and only $K$ is algebraically closed and
\begin{enumerate}
\item [($\sharp$)] for every irreducible Zariski-closed set $W$ of $K^{\alpha(n,m)}$ such that $\phi(W)\subseteq \tau_\D(\pi(W))$, there is $a\in K^n$ with $(a,\d_1 a,\dots,\d_m a)\in \psi(W)$.
\end{enumerate}
\end{theorem}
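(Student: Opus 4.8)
The plan is to mirror the structure of the ordinary-case proposition, replacing Lando's unconditional realization (Fact \ref{land}) by the bounded-prolongation realization theorem (Fact \ref{useful}). The content of condition ($\sharp$) is precisely engineered so that an irreducible $W \subseteq K^{\alpha(n,m)}$ encodes a differential kernel of length $1$ in $n$ variables \emph{together with} its prolongation up to length $C_{1,m}^n$: the $\alpha(n,m) = n|\Gamma(C_{1,m}^n)|$ coordinates index symbols $a_i^\xi$ for $\xi \in \Gamma(C_{1,m}^n)$, the map $\psi$ reads off the length-$1$ kernel data $(a, \d_1 a,\dots,\d_m a)$, and the condition $\phi(W) \subseteq \tau_\D(\pi(W))$ is exactly the requirement that the derivations $D_k$ sending $a_i^\xi \mapsto a_i^{\xi + \mathbf{k}}$ (for $\xi \in \Gamma(C_{1,m}^n - 1)$) be well-defined and commute with $\D$ on the generic point.

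For the forward direction (left to right), suppose $(K,\D) \models \DCF_{0,m}$ and let $W$ be as in ($\sharp$). First I would take a generic point $b = (b_i^\xi)_{\xi \in \Gamma(C_{1,m}^n)}$ of $W$ over $K$ in some algebraically closed extension $F$. The containment $\phi(W) \subseteq \tau_\D(\pi(W))$ says that the tuple $\phi(b)$ lies in $\tau_\D$ of the generic point $\pi(b)$; unwinding the defining equations of the prolongation, this is exactly the statement that there exist derivations $D_k$ on $K\big(b_i^\xi : \xi \in \Gamma(C_{1,m}^n - 1)\big)$ extending $\d_k$ with $D_k b_i^\xi = b_i^{\xi + \mathbf{k}}$, via the standard criterion for extending derivations to a generically-presented variety (as in the ordinary case, using \cite[Chapter~7, Theorem~5.1]{La}). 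Thus $L = K(b)$ together with the $D_k$ is a differential kernel of length $1$ which, by construction, already carries a prolongation of length $C_{1,m}^n$. By Fact \ref{useful} this kernel has a regular realization $(M,\D')$; restricting to the length-$1$ data gives $(b, \d_1' b, \dots, \d_m' b) = \psi(b) \in \psi(W)$ inside $M$. Since $\psi(W)$ is a $K$-constructible set with a point in the differentially closed extension $(M,\D')$, and $(K,\D)$ is existentially closed, there is $a \in K^n$ with $(a, \d_1 a, \dots, \d_m a) \in \psi(W)$.

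For the converse (right to left), assume $K$ is algebraically closed and satisfies ($\sharp$); I would show $(K,\D)$ is existentially closed among differential fields of characteristic zero, equivalently that any quantifier-free $\D$-type consistent over $(K,\D)$ is realized in $K$. Given a finitely generated differential field extension, generated by a tuple $a$ of length $n$, I would pass to the Zariski-closure $W$ of the point $(a_i^\xi)_{\xi \in \Gamma(C_{1,m}^n)}$ (where $a_i^\xi := \d^\xi a_i$) over $K$; this $W$ is irreducible, and because the $\d^\xi a_i$ genuinely arise by differentiating inside a differential field, the kernel relations hold, giving $\phi(W) \subseteq \tau_\D(\pi(W))$. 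Condition ($\sharp$) then produces a point $a' \in K^n$ with $(a', \d_1 a', \dots, \d_m a') \in \psi(W)$, and a standard induction (lifting the realization through successive prolongation levels, each time invoking ($\sharp$) on the relevant closed set to match the next layer of derivatives) shows the full differential type of $a$ over $K$ is realized in $K$.

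I expect the main obstacle to be the converse direction, specifically verifying that genericity is preserved through the inductive lifting so that the point produced by ($\sharp$) really extends to a realization of the \emph{entire} quantifier-free type, rather than just the finitely many derivatives below level $C_{1,m}^n$. The delicate point is that one must argue that satisfying the prolongation conditions at every finite stage suffices — this is where the precise recursive choice of the bound $C_{1,m}^n$ and the compatibility of the projections $\pi$, $\psi$, $\phi$ must be tracked carefully, and where a naive translation of the ordinary argument (in which length $1$ always suffices by Fact \ref{land}) would break down.
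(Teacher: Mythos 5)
Your forward direction is essentially the paper's argument: take a generic point of $W$ over $K$, use $\phi(W)\subseteq\tau_\D(\pi(W))$ to extend the derivations, observe that the resulting length-$1$ kernel comes with a built-in prolongation of length $C_{1,m}^n$ so that Fact~\ref{useful} applies, and finish by existential closedness of $(K,\D)$. That half is fine.

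The converse is where there is a genuine gap, and it sits exactly at the point you flag but do not resolve. You propose to realize the \emph{full} quantifier-free differential type of the generating tuple in $K$ by ``lifting through successive prolongation levels, each time invoking ($\sharp$)''. Two problems. First, the target is wrong: realizing the entire quantifier-free type of $a$ over $K$ inside $K$ is impossible in general (e.g.\ when $a$ is differentially transcendental over $K$ and $K$ is not saturated); existential closedness only requires realizing a single quantifier-free formula, which mentions derivatives up to some finite order $r$ only. Second, even for that weaker goal your induction has no mechanism to succeed: each application of ($\sharp$) returns \emph{some} point of the relevant $\psi(W)$, with no compatibility between the point produced at one level and the point produced at the next, so the successive realizations need not cohere. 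The paper avoids induction altogether by a re-indexing trick, which is the idea missing from your proposal: writing $\rho(x)=\gamma(\d^\xi x:\xi\in\Gamma(r))$ in $t$ variables with realization $c$, it sets $n:=t\binom{r-1+m}{m}$ and applies ($\sharp$) \emph{once} to the Zariski locus $W$ of $(\d^\xi d)_{\xi\in\Gamma(C_{1,m}^n)}$, where $d:=(\d^\xi c)_{\xi\in\Gamma(r-1)}$ is treated as a single $n$-tuple. The identities saying that the $(\xi+{\bf k})$-block of $d$ coincides with the $\xi$-block of $\d_k d$ are polynomial (indeed linear) relations holding on $W$, so the point $a\in K^n$ returned by ($\sharp$) automatically satisfies $a^\xi=\d^\xi a^{\bf 0}$ for all $\xi\in\Gamma(r-1)$; then $(a,\d_1 a,\dots,\d_m a)\in\psi(W)$ places $(\d^\xi a^{\bf 0})_{\xi\in\Gamma(r)}$ in the Zariski locus of $(\d^\xi c)_{\xi\in\Gamma(r)}$, whence $a^{\bf 0}$ realizes $\rho$. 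In other words, the ``$n$'' fed into axiom ($\sharp$) is not the number of differential variables but the (much larger) number of derivative coordinates up to order $r-1$; once that packaging is done, one application of ($\sharp$) suffices and no lifting is needed.
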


\begin{proof}
Assume $(K,\D)$ is differentially closed. Let $W$ be as in condition ($\sharp$), we must find a point $a\in K^n$ such that $(a,\d_1 a,\dots,\d_m a)\in \psi(W)$. Let $F$ be an algebraically closed field extension of $K$ containing a generic point $b=(b^{\xi})_{\xi\in\Gamma(C_{1,m}^n)}$ of $W$ over $K$. Then $(b^{\xi})_{\xi\in\Gamma(C_{1,m}^n-1)}$ is a generic point of $\pi(W)$ over $K$, and 
$$\phi(b)=\left((b^{\xi})_{\xi\in\Gamma(C_{1,m}^n-1)},(b^{\xi+{\bf 1}})_{\xi\in\Gamma(C_{1,m}^n-1)},\dots,(b^{\xi+{\bf m}})_{\xi\in\Gamma(C_{1,m}^n-1)}\right)\in \tau_\D (\pi(W))$$
By the standard argument for extending a single derivation (see \cite[Chapter~7, Theorem~5.1]{La}, for instance), we have that there are derivations 
$$D'_k:K(b_i^\xi:1\leq i\leq n,\xi\in \Gamma(C_{1,m}^n-1))\to K(b_i^\xi:1\leq i\leq n,\xi\in\Gamma(C_{1,m}^n))$$
for $k=1,\dots,m$ extending $\d_k$ and such that $D'_kb_i^\xi=b_i^{\xi+{\bf k}}$ for all $\xi\in\Gamma(C_{1,m}^n-1)$. Thus, $L':=K(b_i^\xi:1\leq i\leq n,\xi\in\Gamma(C_{1,m}^n))$ is a differential kernel over $K$ and, moreover, it is a prolongation of length $C_{1,m}^n$ of the differential kernel $L=K(b_i^\xi:1\leq i\leq n,\xi\in\Gamma(1))$ of length 1 with $D_k=D_k'|_{L}$. By Fact \ref{useful}, $L$ has a regular realization; i.e., there is a differential field extension $(M,\D')$ of $(K,\D)$ such that $\d_k' b^{\bf 0}=b^{\bf k}$. Then $(b^{\bf 0},\d_1' b^{\bf 0}, \dots,\d_m' b^{\bf 0})\in \psi(W)$. Since $(K,\D)$ is differentially closed, we can find the desired point $a$ in $K^n$.

For the converse, let $\rho(x)$ be a quantifier-free formula (in the language of differential rings with $m$ derivations) in variables $x=(x_1,\dots,x_t)$ over $K$ with a realization $c$ in a differential field extension $(F,\D)$ of $(K,\D)$. By passing to the algebraic closure of $F$, we may assume that $F$ is algebraically closed. We must show that $\rho$ has a realization in $K$. We can write
$$\rho(x)=\gamma(\d^\xi x:\xi\in\Gamma(r))$$
where $\gamma((x^\xi)_{\xi\in\Gamma(r)})$ is a quantifier-free formula in the language of rings over $K$, for some $r$ and where $\d^\xi:=\d_1^{\xi_1},\dots,\d_m^{\xi_m}$. If $r=0$, $\rho(x)$ has a realization in $K$ since $K$ is algebraically closed. Now assume $r>0$. Let $n:=t\cdot\binom{r-1+m}{m}$, $d:=(\d^\xi c)_{\xi\in\Gamma(r-1)}$, and
$$W:=\operatorname{Zar-loc}_K(\d^\xi d:\xi\in\Gamma(C_{1,m}^n))\subseteq F^{\alpha(n,m)}.$$
We have that $\phi(W)\subseteq \tau_\D(\pi(W))$. By ($\sharp$), there is $a=(a^\xi)_{\xi\in\Gamma(r-1)}\in K^n$ such that $(a,\d_1 a,\dots,\d_m a)\in \psi(W)$. This implies that $a^{\xi}=\d^\xi a^{\bf 0}$ for all $\xi\in \Gamma(r-1)$. Thus, 
$$(\d^\xi a^{\bf 0})_{\xi\in\Gamma(r)}\in\operatorname{Zar-loc}_K((\d^\xi c)_{\xi\in\Gamma(r)})\subseteq F^{t\cdot\binom{r+m}{m}},$$
and so, since $(\d^\xi c)_{\xi\in\Gamma(r)}$ realizes $\gamma$, we have that $(\d^\xi a^{\bf 0})_{\xi\in\Gamma(r)}$ also realizes $\gamma$. Consequently, $K\models \rho(a^{\bf 0})$.
\end{proof} 

Let us conclude with some comments on extending the above results to the positive characteristic situation. While the results from \cite{GO}, in particular Fact~\ref{useful}, were established for differential fields of characteristic zero, it is likely that the arguments of Pierce from \cite[\S4]{Pie} yield a formulation of Fact \ref{useful} that holds in positive characteristic with a possibly different bound $C_{r,m}^n$ (but still exclusively depending on the data $(r,m,n)$). This would yield a purely algebro-geometric axiomatization of the theory $\DCF_{p,m}$ for $p>0$. Note that, in the ordinary case, a positive characteristic analogue of Fact \ref{land} does hold, see \cite[Fact 1.7]{Kow}. Now, what seems to be more interesting is to work out the case of several commuting \emph{``derivations of a power of the Frobenius''}. More precisely, recall that an additive operator $\d$ on $K$ is a derivation of the $n$-th power of the Frobenius if 
$$\d(ab)=\d(a) b^{p^n}+a^{p^n}\d(b)$$
for all $a,b\in K$. If we fix the characteristic $p>0$ and an $m$-tuple $\bar n=(n_1,\dots,n_m)$ of nonnegative integers, we can consider the theory of fields of characteristic $p$ equipped with $m$ commuting additive operators $\d_1,\dots,\d_m$ such that $\d_i$ is a derivation of the $n_i$-th power of the Frobenius. Let us call such fields $\bar n$-differential fields. The ordinary case ($m=1$) was worked out by Kowalski in \cite{Kow}, where he proved that the theory of $n$-differential fields has a model companion using an algebro-geometric axiomatization. We leave the following (open) question for future work:

\begin{question}
For arbitrary $m$, does the theory of $\bar n$-differential fields in characteristic $p>0$ have a model companion? If so, is there an algebro-geometric axiomatization?
\end{question}

\bibliographystyle{plain}

\begin{thebibliography}{10}


\bibitem{Bl}
L. Blum.
\newblock Differentially Closed Fields: A Model Theoretic Tour.
\newblock Contributions to Algebra, Academic Press Inc., 1977.

\bibitem{Bu}
R. Bustamante.
\newblock Differentially Closed Fields of Characteristic Zero with a Generic Automorphism.
\newblock Revista de Matem\'atica: Teor\'ia y Aplicaciones, 14(1):81--100, 2007.

\bibitem{CH}
Z. Chatzidakis and E. Hrushovski.
\newblock Model Theory of Difference Fields.
\newblock Transactions of the American Mathematical Society, 351(8):2997--3071, 1999.

\bibitem{GO}
R. Gustavson and O. Le\'on S\'anchez.
\newblock Effective bounds for the consistency of differential equations.
\newblock Preprint 2016. \url{http://arxiv.org/abs/1601.02995}

\bibitem{Hi}
M. Hils. 
\newblock Generic automorphisms and green fields. 
\newblock Journal of the London Mathematical Society, 85(2):223--244, 2012. 

\bibitem{Kow}
P. Kowalski.
\newblock Derivations of the Frobenius map.
\newblock Journal of Symbolic Logic, 70(1):99--110, 2005.

\bibitem{Kow2}
P. Kowalski.
\newblock Geometric Axioms for Existentially Closed Hasse Fields.
\newblock Annals of Pure and Applied Logic, 135:286--302, 2005.

\bibitem{Lando}
B. Lando.
\newblock Jacobi's bound for the order of systems of first order differential equations.
\newblock Transactions of the American Mathematical Society, 152:119--135, 1970.

\bibitem{La}
S. Lang.
\newblock Algebra.
\newblock Springer-Verlag. Third Edition, 2002.

\bibitem{LS} 
O. Le\'on S\'anchez.
\newblock Geometric axioms for differentially closed field in several commuting derivations.
\newblock Journal of Algebra, 362:107--116, 2012. (Corrigendum, 382:332--334, 2012.

\bibitem{LS2}
O. Le\'on S\'anchez. 
\newblock Corrigendum to ``Geometric axioms for differentially closed fields with several commuting derivations''. 
\newblock Journal of Algebra, 382:332--334, 2013.

\bibitem{LS3}
O. Le\'on S\'anchez.
\newblock On the model companion of partial differential fields with an automorphism. Israel Journal of Mathematics, 2016. 

\bibitem{Mc}
T. McGrail.
\newblock The Model Theory of Differential Fields with Finitely Many Commuting Derivations.
\newblock The Journal of Symbolic Logic, 65(2):885--913 (2000).

\bibitem{MS}
R. Moosa and T. Scanlon.
\newblock Model theory of fields with free operators in characteristic zero.
\newblock Journal of Mathematical Logic, 14(2), 2014.

\bibitem{P1}
D. Pierce.
\newblock Differential forms in the model theory of differential fields.
\newblock J. of Symbolic Logic, 68(3), 2003.

\bibitem{P2}
D. Pierce.
\newblock Geometric characterization of existentially closed field with operators. 
\newblock Illinois Journal of Mathematics, 48(4), 2004.

\bibitem{Pie}
D. Pierce.
\newblock Fields with several commuting derivations.
\newblock The Journal of Symbolic Logic, 79(1):1--19, 2014.

\bibitem{PP}
D. Pierce and A. Pillay.
\newblock A Note on the Axioms for Differentially Closed Fields of Characteristic Zero.
\newblock Journal of Algebra 204, pp. 108-115 (1998).

\bibitem{R}
A. Rosenfeld.
\newblock Specializations in differential algebra.
\newblock {\em Transactions of the American Mathematical Society}, 90:394--407, 1959.

\bibitem{Tr}
M. Tressl.
\newblock The Uniform Companion for Large Differential Fields of Characteristic 0.
\newblock Transactions of the American Mathematical Society, 357(10):3933-3951, 2005.

\end{thebibliography}

\end{document}